\DeclareMathOperator{\Gal}{Gal}
\DeclareMathOperator{\End}{End}
\DeclareMathOperator{\sep}{sep}
\DeclareMathOperator{\Ind}{Ind}
\DeclareMathOperator{\reg}{reg}
\DeclareMathOperator{\GL}{Gl}
\newtheorem{ex}{Example}
\newtheorem{defin}{Definition}
\newtheorem{theorem}{Theorem}
\newcommand{\overbar}[1]{\mkern 1.5mu\overline{\mkern-1.5mu#1\mkern-1.5mu}\mkern 1.5mu}
\newcommand{\Q}{\mathbb Q}
\begin{document}
\title{Notes on Explicit Constructions of Arithmetically Equivalent Global Function Fields via Torsion Points On Drinfeld Modules.}
\author{Pavel Solomatin \\
\texttt{pavelsolomatin179@gmail.com}
}

\date{ Leiden, 2021.}
\maketitle  

\begin{abstract}
In this short note we provide a few examples of non-isomorphic arithmetically equivalent global function fields. These examples are obtained via well-known technique of adjoining the torsion points of various Drinfeld Modules to realise the $\GL_n(\mathbb F_q)$ as a Galois group of extensions of global function fields. Furthermore we afford the code of the Magma scripts to verify the results and construct more examples in similar fashion.  
\end{abstract}

\textbf{\\ \\ \\ \\ \\ \\ \\  \\ \\ \\ \\ \\  \\ Acknowledgements:} 
I would like to thank professor David Zywina for providing relevant references and answering my questions.

\newpage

\section{Introduction}
\subsection{Arithmetically Equivalent Number Fields}
Given a number field $K$, i.e. a finite separable extension of the field of rational numbers $\Q$, one can study the following informal question: \emph{What kind of information the splitting type of the rational primes $(p)$ in the ring of integers $\mathcal O_K$ captures about the field $K$?} This question leads naturally to the beautiful theory of \emph{arithmetically equivalent number fields}. In essence the theory allows one to construct pairs of non-isomorphic number fields $K$,~$L$ which share a lot of arithmetic invariants. We start from a quick remainder about the basic ingredients of the construction. 

We say that  two number fields $K$ and $L$ \emph{split equivalently} if for every except finitely many prime numbers $p \in \mathbb Z$ there exists a bijection $\phi_p$ from the set of primes in $\mathcal O_K$ lying above $p$ to the set of those primes in $\mathcal O_L$. If for every except finitely many $p$ there exists such a bijection which preserves the degrees of the ideals then we say that $K$ and $L$ are \emph{arithmetically equivalent}. Also let $N$ denote the common Galois closure of $K$ and $L$ over $\mathbb Q$ and let $G= \Gal(N / \mathbb Q) $, $H = \Gal(N / K) $, $H' = \Gal(N / L) $. We will call a triple $(G,H, H')$ a \emph{Gassmann triple} if the following induced representations are isomorphic: $ \Ind^{G}_{H} (1_{H}) \simeq \Ind^{G}_{H'} (1_{H'})$, where $1_H$(and $1_{H'}$) means trivial representation of $H$(of $H'$ respectively). Then the following famous Theorem due to Perlis \cite{Perl} asserts that:
\begin{theorem}[Perlis]\label{MainPerl}
The following statements are equivalent: 
\begin{enumerate}
        \item  $K$ and $L$ split equivalently; 
        \item  $K$ and $L$ are arithmetically equivalent; 
        \item 	$(G,H,H')$ form a Gassmann triple;
        \item $\zeta_K(s) = \zeta_L(s)$.  		
\end{enumerate}
Moreover, if the above conditions hold then $K$ and $L$ share the following invariants: the degree, the Discriminants $D_K = D_{L}$, group of units $\mathcal {O}_{K}^{\times} \simeq \mathcal {O}_{L}^{\times}$, product of regulator and class-number $\reg(\mathcal O_K)h_K = \reg(\mathcal O_L)h_L$ and etc.
\end{theorem}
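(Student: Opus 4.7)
The plan is to reduce every implication to a statement about the action of $G$ on the coset spaces $G/H$ and $G/H'$. The key dictionary is that, for any prime $p$ unramified in $N$, the decomposition of $p$ in $K = N^H$ is governed by the cycle structure of the Frobenius $\sigma_p \in G$ acting on $G/H$: each cycle of length $f$ corresponds to a prime of $K$ above $p$ of residue degree $f$, and the analogous statement holds for $L = N^{H'}$. Chebotarev density then guarantees that every conjugacy class of $G$ is realised by $\sigma_p$ for infinitely many $p$.

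First I would prove (3) $\Leftrightarrow$ (2). The induced representation $\Ind^{G}_{H} 1_H$ is precisely the permutation representation on $G/H$, so its character $\chi_H(g)$ equals the number of fixed points of $g$ on $G/H$. Möbius inversion over the divisors of the order of $g$ recovers the full cycle type of $g$ from the sequence of values $\chi_H(g), \chi_H(g^2), \chi_H(g^3), \ldots$; hence $\Ind^{G}_{H} 1_H \simeq \Ind^{G}_{H'} 1_{H'}$ is equivalent to every $g \in G$ having the same cycle type on $G/H$ as on $G/H'$, which by the dictionary above is precisely (2). The implication (2) $\Rightarrow$ (1) is trivial. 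For (1) $\Rightarrow$ (3), the number of orbits of $\langle \sigma_p \rangle$ on $G/H$ is by Burnside's lemma an average of the values $\chi_H(\sigma_p^i)$, and one must extract $\chi_H \equiv \chi_{H'}$ from the vanishing of these averages across all conjugacy classes; this is done by a second Möbius inversion on the lattice of cyclic subgroups of $G$.

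For (3) $\Leftrightarrow$ (4), the key identity is the factorisation $\zeta_K(s) = L(s, \Ind^{G}_{H} 1_H, N/\Q)$ of the Dedekind zeta function as an Artin L-function. Since Artin L-functions depend only on the representation up to isomorphism, (3) immediately yields $\zeta_K(s) = \zeta_L(s)$. Conversely, the Euler factor of $\zeta_K$ at an unramified prime $p$ reads off the multiset of residue degrees of primes of $K$ above $p$, so $\zeta_K = \zeta_L$ gives back (2), which we already know is equivalent to (3).

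Finally, the additional invariants drop out once the equivalence is established. The discriminants agree by applying the conductor--discriminant formula to the isomorphic representations $\Ind^{G}_{H} 1_H \simeq \Ind^{G}_{H'} 1_{H'}$. The signature $(r_1, r_2)$ is determined by the restriction of $\Ind^{G}_{H} 1_H$ to a decomposition group at an infinite place of $N$, so Dirichlet's unit theorem forces $\mathcal{O}_K^\times$ and $\mathcal{O}_L^\times$ to have the same rank; a short separate argument using cyclotomic subfields handles the roots of unity and upgrades this to an isomorphism. The product $h_K \reg(\mathcal{O}_K)$ matches because the analytic class number formula expresses it in terms of the leading coefficient of $\zeta_K(s)$ at $s = 1$, which we have just matched. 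I expect the main obstacle to be the implication (1) $\Rightarrow$ (3): passing from mere equality of the number of primes above $p$ to isomorphism of the induced representations is the most delicate point and requires carefully combining Chebotarev density with Möbius inversion on cyclic subgroups, rather than the slicker cycle-structure argument available once degrees are known.
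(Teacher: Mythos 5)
The paper does not contain a proof of Theorem~\ref{MainPerl}: it is stated as background and attributed to Perlis via \cite{Perl}, so there is no internal argument to compare against. Your sketch follows the standard route in the cited literature (permutation characters, Chebotarev density, cycle-type recovery by M\"obius inversion, and Artin formalism for the zeta-function identity), and it is correct in its essentials. The one step I would want spelled out is $(1)\Rightarrow(3)$: the cleanest version of your ``second M\"obius inversion'' is an induction on the order $n$ of $g$, using that $\chi_H(g^k)$ depends only on the cyclic subgroup $\langle g^k\rangle$, so that Burnside's count becomes $\sum_{k=0}^{n-1}\chi_H(g^k)=\sum_{d\mid n}\phi(n/d)\,\chi_H(g^d)$, and the inductive hypothesis then isolates $\phi(n)\bigl(\chi_H(g)-\chi_{H'}(g)\bigr)=0$. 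Also note that the analytic class number formula gives $h_K\reg_K=h_L\reg_L$ only after you also know $w_K=w_L$ together with matching signatures and discriminants, so the cyclotomic argument you defer to must land before you invoke that formula.
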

   
According to the Galois theory $K$ and $L$ are not isomorphic if and only if $H$ and $H'$ are not conjugate inside $G$. In the later case we say that the Gassmann triple $(G, H, H')$ is non-trivial. We will also say that a Gassmann triple $(G, H, H')$ has index $n$, where $n = \frac{|G|}{|H|} = \frac{|G|}{|H'|}$. In other words the index $n$ is the degree of the number fileds $K$, $L$. The above mentioned Theorem provides us with the powerful method to construct non-isomorphic arithmetically equivalent number fields. Namely, one first finds a non-trivial Gassmann triple $(G, H, H')$ and then solves explicitly the inverse Galois problem to realise $G$ as the Galois group $\Gal(N : \mathbb Q)$ of a field extension $N$ over $\mathbb Q$. The classification of non-trivial Gassmann triples of small degrees is well-known. For instance, all the possible triples of degree not exceeding 15 were classified in \cite{Bart2}. Below we provide some classical instances:

\begin{ex}\label{ExTransp}
Fix a prime number $p>2$. Let $G$ be $\GL_{2}(\mathbb F_p)$ and let $H = \left \{ \begin{bmatrix} 1 & * \\ 0 & * \end{bmatrix} \in G \right \}$ and $H' = \left \{ \begin{bmatrix} * & * \\ 0 & 1 \end{bmatrix} \in G \right \}$. Then the triple $(G,H,H')$ is a non-trivial Gassmann triple.
\end{ex}

In similar fashion one constructs Gassmann triples with $G = \GL_n(\mathbb F_q)$ and some of its quotients, see \cite{Bart2}:
\begin{ex}\label{Example2}
Let $S$ be a subgroup in $\mathbb F_q^{\times}$ of index $s$ and let $G = \GL_n(\mathbb F_q)/S$, $n \ge 2$. By construction $\GL_n(\mathbb F_q)$ is endowed with an action on the vector space $V = {\mathbb F_q}^{n} $ and also on its dual vector space $V^{\star}$. One defines $H$, $H' \subset G$  as stabilizers of $(V - \{ 0 \})/S$ and $(V^{\star} - \{ 0 \})/S$ respectively. Then if $s \ge 2$ or $n \ge 3$ then $(G, H, H')$ is a non-trivial Gassmann triple of index $\frac{s(q^n - 1)}{q-1}$.
\end{ex} 

This shows that solving the inverse Galois problem for groups of $\GL_n(\mathbb F_q)$ type allows one to produce arithmetically equivalent number fields. For $n=2$ and $q = p$ been a prime number this can be achieved by using $p$-torsion points of elliptic curves defined over $\mathbb Q$. Roughly speaking the Serre's open image Theorem asserts that for an elliptic curve $E$ without complex multiplication and a random prime number $p$ the Galois group of the normal closure of the field extension generated by the coordinates of $p$-torsions $E[p]$ is isomorphic to $\GL_2(\mathbb F_p)$. This method was used for example in \cite{Bart3} to construct families of arithmetically equivalent number fields with non-isomorphic class-groups. 

Motivated by the number field case treated above one can ask what is the function field analogue of the arithmetical equivalence. Despite the fact that this question has been already addressed in some research projects, there are not so many known explicit examples. So with this text we are trying to improve the situation. For now we focus our attention on the case of arithmetically equivalent global function fields.

\subsection{Arithmetically Equivalent Global Function Fields}
Let $q=p^m$, $p$ be a prime number. For the purpose of this note we use the following definition: a \emph{global function field} is a finite, separable extension of the field $F = \mathbb F_q(T)$. In particular \emph{all the morphisms between global function fields should fix the base field $F$}. As a consequence of this requirement there exist non-isomorphic function fields which are isomorphic as abstract fields. Recall that the polynomial ring $A=\mathbb F_q[T]$ is an analogue of $\mathbb Z$ with prime numbers replaced by monic irreducible polynomials and that prime ideals in $\mathbb F_q[T]$ are in one-to-one correspondence with monic irreducible polynomials in $\mathbb F_q[T]$. Given a finite separable geometric extension $K$ of $F$ we consider the integral closure $\mathcal O_K$ of $\mathbb F_q[T]$ in $K$. As in the number field case ideals of $\mathcal O_F = \mathbb F_q[T]$ split in $\mathcal O_K$ and one can address the same question regarding the arithmetic properties of $K$ captured by the splitting of ideals in $\mathcal O_K$. In the given function field settings the notions of arithmetical, split and Gassmann equivalences are transferred without any changes and an analogue of Theorem \ref{MainPerl} holds with only one exception. Namely, the first three conditions are still equivalent and imply the equality of zeta-functions, but the later condition is not enough to deduce one of the other three equivalent statements, see \cite{Gunt1} and \cite{ArEq}. 

This result implies that again the realisation of $\GL_n(\mathbb F_{q'})$ as a Galois group\footnote{with $q'$ not necessarily equals to $q$.} of an extension of $\mathbb F_q(x)$ allows us to produce pairs of arithmetically equivalent function fields. But in contrast to the number filed case there are two different ways to do so: either using \emph{torsion points of elliptic curves defined over $\mathbb F_q(T)$} or using \emph{torsion points of Drinfeld modules}. The first method was applied in \cite{ArEq} to construct arithmetically equivalent pairs coming from $\GL_2(\mathbb F_l)$ with $l \ne p$. In these notes we show how to apply the second method to construct pairs coming from $\GL_n(\mathbb F_{p^{m}})$. In the next section we briefly recall the main definitions with regard to Drinfeld modules.

\section{Preliminaries: Galois Extensions Obtained by Torsion Points on Drinfeld Modules}
\subsection{Required Definitions}
General definition of Drinfeld modules associated to an arbitrary global function field $K$ is rather complicated, but in this chapter we provide a more elementary one which is sufficient for our purposes. All the definitions and examples given below are quite standard and therefore stated without exact references. For more detailed explanations an interested reader could consult last chapters of \cite{Rosen} as well as more advanced materials, for instance \cite{Goss} and \cite{thakur2004function}. 

Let $k$ be any field of characteristic $p$ endowed with a homomorphism $\phi:~ A = \mathbb F_q[T]~\to~k$. One considers \emph{the ring of twisted polynomials} $k\left<\tau\right>$ where the multiplication law is defined by the rule $\tau a = a^q \tau $ for $a \in k$. Let $\rho$ be an $\mathbb F_q$-algebra homomorphism from $\mathbb F_q[T]$ to $k\left<\tau\right>$. This homomorphism is defined by the image of $T$ in $k\left<\tau\right>$ which we denote by: $$\rho_{T} = \sum_{i=0}^{N} a_i \tau^{i},$$
with $a_i \in k$, $a_N \neq 0$.

\begin{defin}
A Drinfeld module for $\mathbb F_q[T]$ over $k$ is an $\mathbb F_q$-algebra homomorphism $\rho$ from $A$ to $k\left<\tau \right>$ such that $N>0$ and $a_0 = \phi(T)$. The number $N$ is called rank of $\rho$ and the kernel of $\phi$ in $A$ is called characteristic of $\rho$. If $\phi$ is injective we say that $\rho$ has general characteristic.
\end{defin}

The main working example for us is $k = F = \mathbb F_q(T)$ and $\phi$ be the standard embedding $A \to \mathbb F_q(T)$. In particular this implies that $a_0 = T$ and that $\rho$ is of general characterstic. Associating $\tau$ with the Frobenius map $x \to x^{q}$ each Drinfeld module $\rho$ over $F$ endows $A$ with a non-trivial action on the algebraic closure $\overbar{F}$ of $F$. Namely for an element $u \in A$ the image $\rho_u = \sum a_i(u)\tau^i$ acts on $x \in \overbar{F}$ as $ x \to \sum a_i(u) x^{p^{i}}$. An important observation is that the multiplication law in $k \left< \tau \right>$ is agreed with the composition of such operators.

\begin{ex}
The most famous example of Drinfeld modules is the so-called Carlitz module which is defined by $\rho_T = \tau + T$. For instance the corresponding to $u = T^2 + 1$ operator is given by $\rho_{T^2 + 1} = (\tau + T)(\tau + T) + 1 = \tau^2 + (T^q + T)\tau + T^2 + 1$ which sends $x \in {F}$ to $\rho_{T^2 + 1}(x) = x^{p^2} + (T^q + T)x^p + (T^2 + 1)x$.
\end{ex}

\begin{defin}
Given two Drinfeld modules $\rho$, $\rho'$ over $k$ one defines a $k$-morphism from $\rho$ to $\rho'$ as an element $u \in k \left<\tau\right>$ such that $\rho u = u \rho'$. The centralizer of the image $\rho(A)$ in $\overbar{K}\left<  \tau \right>$ is called the endomorphism ring which we denote by $$\End_{\overbar{k}}(\rho) = \{ u \in \overbar{k}\left< \tau \right> | \rho u = u \rho \}.$$
\end{defin}
 
Note that  $\End_{\overbar{k}}(\rho)$ contains $\rho(A)$ and is a free $\rho(A)$-module of rank at most $r^2$. The following notions play the crucial role in the main construction we are going to use: 

\begin{defin}
We say that $\rho$ admits complex multiplication if there exists $a \in \End_{\overbar{k}}(\rho)$, with $a \not \in \rho(A)$. 
\end{defin}

\begin{defin}
Given $a \in A$ one defines the set of $a$-torsion points of $\rho$ as $$\rho [a] = \left \{ x \in F^{\sep} \ | \ \rho_a(x) =  0  \right \}.$$ 
\end{defin}
It is not difficult to see that as $A$-module one has an isomorphism: $$\rho [a] \simeq (A/aA)^{r},$$ and that the Galois group of $F^{\sep}$ is acting on $\rho [a]$ which induces the representation: 
\begin{equation}\label{repres}
\mathcal G_F = \Gal( F^{\sep} : F ) \to \GL_r ( A/aA ).
\end{equation}

\subsection{The Open Image Theorem and an Algorithm}
Combing all such representations for various $a \in \mathbb F_q[T]$ one gets an Adelic representation $\mathcal G_F \to \GL_r( \hat{A} )$. The key fact is the following Theorem due to R.Pink and E. R\"utsche which is an analogue of the \emph{Serre's open image Theorem}:

\begin{theorem}
For a Drinfeld module $\rho$ such that $\End_{\overbar{K}}(\rho) = \rho(A)$ the image of $\mathcal G_F$ in $Gl_r( \hat{A} )$ has finite index. In other words, for almost all except finitely many $a \in A$ the map from \ref{repres} is surjective.
\end{theorem}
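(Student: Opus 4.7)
The plan is to decompose the adelic representation into its $\mathfrak{p}$-adic components, indexed by the monic irreducible polynomials $\mathfrak{p} \in A$, analyse each separately, and then recombine them. First I would show that for each prime $\mathfrak{p}$ the image of $\mathcal{G}_F$ in $\GL_r(A_\mathfrak{p})$ is open; then that for almost all $\mathfrak{p}$ this image coincides with the full $\GL_r(A_\mathfrak{p})$; and finally that the images in distinct components are independent enough that a Goursat-type argument assembles them into an open subgroup of $\GL_r(\hat{A})$, which is exactly the desired finite-index statement.

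The heart of the argument lies in the mod $\mathfrak{p}$ representation $\overline{\rho}_\mathfrak{p}\colon \mathcal{G}_F \to \GL_r(A/\mathfrak{p}A)$. Using the action on the Tate module $T_\mathfrak{p}(\rho)$ together with the hypothesis $\End_{\overbar{K}}(\rho) = \rho(A)$, one shows via Tate-style semisimplicity arguments (the analogue of Faltings's theorem for Drinfeld modules, due to Taguchi and Tamagawa) that the commutant of the image is as small as possible. Combining this with information extracted from the characteristic polynomials of Frobenius elements at primes of good reduction of $F$, one concludes that the image of $\overline{\rho}_\mathfrak{p}$ acts absolutely irreducibly and contains enough regular semisimple (and, for large residue characteristic, transvection-like) elements. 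A case-by-case inspection of the maximal subgroups of $\GL_r(\mathbb{F}_\mathfrak{q})$, where $\mathbb{F}_\mathfrak{q} = A/\mathfrak{p}A$, then rules out all proper possibilities for all but finitely many $\mathfrak{p}$, yielding mod $\mathfrak{p}$ surjectivity.

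Lifting this to $\mathfrak{p}$-adic surjectivity is purely group-theoretic: for $\mathfrak{p}$ outside a fixed finite bad set, any closed subgroup of $\GL_r(A_\mathfrak{p})$ whose image in $\GL_r(\mathbb{F}_\mathfrak{q})$ is the whole group must itself be the whole group, by a Frattini-type argument since the successive congruence quotients $(1+\mathfrak{p}^n M_r(A_\mathfrak{p}))/(1+\mathfrak{p}^{n+1} M_r(A_\mathfrak{p}))$ are abelian and irreducible under the residual adjoint action. The recombination across varying $\mathfrak{p}$ follows from Goursat's lemma, using that the composition factors $\mathrm{PSL}_r(\mathbb{F}_\mathfrak{q})$ are pairwise non-isomorphic simple groups as $\mathfrak{q}$ ranges over distinct residue fields. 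The hardest single ingredient — and the main obstacle — is the mod $\mathfrak{p}$ surjectivity itself, which is where the no-CM hypothesis is genuinely invoked via the semisimplicity of the Tate module; everything downstream of that input is essentially formal.
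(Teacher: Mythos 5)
The paper does not actually prove this theorem: its ``proof'' is the single sentence ``See Theorem~0.1 from \cite{PinkAdelic},'' i.e.\ a citation to Pink and R\"utsche, where all the work lives. Your proposal, by contrast, reconstructs a plausible high-level outline of the Pink--R\"utsche argument itself, and it is broadly in the right spirit: reduction to residual surjectivity using Taguchi's semisimplicity and the Taguchi--Tamagawa Tate conjecture for Drinfeld modules, then a Frattini-type lifting to $\mathfrak p$-adic openness, then a Goursat-style recombination across primes. So you went well beyond what the paper offers.

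A few places where your sketch smooths over genuine work. The congruence quotients $(1+\mathfrak p^n M_r(A_\mathfrak p))/(1+\mathfrak p^{n+1} M_r(A_\mathfrak p))$ carry the adjoint action on $\mathfrak{gl}_r$, which is \emph{not} irreducible: it splits into the scalar line and $\mathfrak{sl}_r$. Consequently the Frattini lifting does not close up by itself; one has to control the determinant character separately, which in the Drinfeld setting is done through the associated rank-one module and Hayes's explicit class field theory, not by the same irreducibility trick. The Goursat recombination likewise has to handle the abelian determinant part alongside the quasi-simple $\mathrm{SL}_r$ parts, and the ``pairwise non-isomorphic simple factors'' argument only sees the latter. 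Finally, the passage from ``image contains enough regular semisimple and transvection-like elements'' to ``image is all of $\GL_r(\mathbb F_{\mathfrak q})$'' requires the classification of subgroups containing such elements (Serre-style arguments, or the classification of maximal subgroups), which you flag but which is the technical core. None of these is a fatal flaw in your outline, but labelling everything downstream of semisimplicity as ``essentially formal'' undersells where the literature actually spends its effort. For the purposes of this paper, though, none of this is needed: the theorem is used purely as a black box.
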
  
\begin{proof}
See Theorem 0.1 from \cite {PinkAdelic}.
\end{proof}

This means that for a Drinfeld module $\rho$ without complex multiplication and for $a$ been a prime ideal of $A$ of degree $m$ the Galois group of the normal closure of the field obtained by the adjoining the $[a]$-torsions points to $\mathbb F_q(T)$ is isomorphic to $\GL_r(\mathbb F_{q^m})$ except for finitely many choices of $a$. This leads to the following probabilistic algorithm to construct arithmetically equivalent function fields: 

\begin{enumerate}
  \item Pick a Drinfeld module $\rho$ with $\End_{\overbar{K}}(\rho) = \rho(A)$ and a prime ideal $a$ of $A$ of degree $m$;
  \item Calculate the Galois group $G$ of the normal closure of the filed $F(\rho[a])$;
  \item If $G \simeq \GL_r(\mathbb F_{q^m})$ find two subgroups $H$, $H'$ of G which form a non-trivial Gassmann triple $(G, H, H')$ from \ref{ExTransp}. Otherwise pick another $a$; 
  \item The fixed fields $F(\rho[a])^{H}$ and $F(\rho[a])^{H}$ are non-isomorphic arithmetically equivalent function fields.
\end{enumerate}

For an arbitrary Drinfeld module $\rho$ the question of describing the endomorphism ring $\End_{\overbar{k}}(\rho)$ explicitly is not elementary, but can be solved algorithmically as it was shown in \cite{EndomorphDrin}. In contrast, this problem is significantly simpler for  the case of rank-two Drinfeld modules. In this setup one defines the $j$-invariant of $\rho$ which identifies the set of isomorphism classes of Drinfeld modules with the points of the affine line $\mathbb A_1(K)$. Similar to the case of elliptic curves the values of $j(\rho)$ for $\rho$ with complex multiplication should be integral. Therefore a Drinfeld module of rank two with $j \not \in A$ will not have complex multiplication. In addition to that D.Zywina \cite{Zywina2011DrinfeldMW}  constructed an explicit example of a Drinfeld module of rank two with the surjective Adelic representation. A similar example for rank 3 modules has been recently given in \cite{CHEN2020}. Those instances could be used to exclude the "probabilistic" part of the algorithm.
\section{Examples and the Code}
\subsection{Example with the Galois Group $\GL_2(\mathbb F_3)$ over $\mathbb F_3(T)$}
Let us first consider an example of a realisation of $\GL_2(\mathbb F_3)$ as a Galois group of an extension of $\mathbb F_3(T)$. According to the general theory we have to pick an irreducible polynomial $h(\tau)$ of degree two in the ring of skew-polynomials $k\left<\tau \right>$ and generate an extension by using the $T$-torsion\footnote{or any other linear polynomial.} points of the corresponding Drinfeld module. Let us take for instance $h(\tau)~=~\tau^2 + T\tau + T$ and consider the Drinfeld module which is defined	 by the map $T \to h(\tau)$. The associated $T$-torsion points are elements of the kernel of the operator which maps a polynomial $a \in \mathbb F_3[T]$ to $a^{p^2} + T a^{p} + Ta$. This means that the extension generated by the roots of $a^{8} + Ta^2 + T$ has Galois group of normal closure $N$ isomorphic to $\mathbb \GL_2(\mathbb F_3)$. We can verify this with the following Magma script:
\begin{verbatim}
// Constructing the Drinfeld Module rho(T) = tau^2 + T*tau + T
p := 3;
Fq := GF(p);
k<T> := RationalFunctionField(Fq); 
R<tau> := TwistedPolynomials(k);
rho := tau^2 + T*One(R)*tau + T*One(R);

// Constructing extension by adjoining the T-torsion points of rho
P<y> := PolynomialRing(k);
f<y> := Polynomial(rho)/y;
torsionPolynomial := P!f;
torsionPolynomial;
F<alpha> := FunctionField(torsionPolynomial);
G, r, N := GaloisGroup(F);
G0 := GL(2,p);
IsIsomorphic(G,G0);
\end{verbatim}

The output ensures us that indeed the $T$-torsion points are defined by $y^8 + Ty^2 + T$ and that $G := \Gal( k(\rho[T]) : k ) \simeq \GL_2(\mathbb F_3)$:
\begin{verbatim}
y^8 + T*y^2 + T
true Mapping from: GrpPerm: G to GrpMat: G0
...
\end{verbatim} 

Now we can find the arithmetically equivalent pair from example \ref{ExTransp} as subfileds of $N$. In our example the order of $G$ is $48$ and order of $H$ is $p(p-1) = 6$, so the degree of our pair is 8. To obtain the desired pair we iterate over all index 8 subgroups of $G$ to find the two of them that are Gassmann equivalent and print out the corresponding function fields that are fixed by these subgroups:
\begin{verbatim}
h := Subgroups(G: IndexEqual := 8);
n := #h;
for i in [1..n-1] do
    H_1 := h[i]`subgroup;
    for j in [i+1..n] do
      H_2 := h[j]`subgroup;
      if((PermutationCharacter(G, H_1) eq PermutationCharacter(G, H_2)) and 
          (not IsConjugate(G, H_1, H_2))) then
        K := GaloisSubgroup(N, H_1);
        L := GaloisSubgroup(N, H_2);
        "The group H_1 corresponds to the field extensions: ", K; 
        "The group H_2 corresponds to the field extensions: ", L;    
      end if;
    end for;
end for;
\end{verbatim}

The result shows that the function fields $K, L$ generated by the following polynomials are arithmetically equivalent: 
\begin{equation}
\begin{split}
f(y) \  = \ & y^8 + Ty^2 + T ; \\
g(y) \ = \ & y^8 + T^2y^6 + (2T^6 + T^4 + T^3)y^5 + (2T^5 + 2T^3)y^4 + (2T^9 + 2T^6 + 2T^5)y^3 + \\
     & \ \ \ + (T^{16} + 2T^{13} + T^{10} + 2T^9 + T^8 + 2T^6 + T^5)y^2 + \\
     & \ \ \ + (T^{17} + 2T^{13} + T^{10} + 2T^9 + 2T^8 + 2T^7 + T^6)y + \\
     & \ \ \ + T^{21} + T^{15} + T^{14} + T^{13} + 2T^{12} + T^{11} + 2T^{10} + 2T^9 + 2T^7 + T^6 .
\nonumber
\end{split}
\end{equation}

Since $H_1$, $H_2$ are not conjugate inside $G$ we know that $K$, $L$ are not-isomorphic as function fields, but in this particular example they are isomorphic as abstract fields. The easiest way to see it is to check the genus of the corresponding curves which turns out to be zero and therefore $K \simeq L \simeq F$. To check if these two fields indeed split equivalently one can apply the classical Kummer-Dedekind Theorem which asserts that for all except finitely many ideals $\mathcal{P} \subset \mathbb F_q[T]$ the primes in $\mathcal{O}_K$ lying above $\mathcal P$ are in bijection with the factors of $f(y)~\mod~\mathcal{P}$. The code below checks that for a few random prime ideals in $\mathbb F_q[T]$ the factors of $f(y) \mod \mathcal P$ and $g(y) \mod \mathcal P$ coincide.

\begin{verbatim}
for i in [1..10] do
  P := RandomIrreduciblePolynomial(Fq, i);
  R<T> := ExtensionField<k, T | P>;
  RR<y> := PolynomialRing(R);
  RR;
  f := y^8 + T*y^2 + T;
  g := y^8 + T^2*y^6 + (2*T^6 + T^4 + T^3)*y^5 + (2*T^5 + 2*T^3)*y^4 + 
         (2*T^9 + 2*T^6 + 2*T^5)*y^3 +
         (T^16 + 2*T^13 + T^10 + 2*T^9 + T^8 + 2*T^6 + T^5)*y^2 + 
         (T^17 + 2*T^13 + T^10 + 2*T^9 + 2*T^8 + 2*T^7 + T^6)*y + 
         T^21 + T^15 + T^14 + T^13 + 2*T^12 + T^11 + 2*T^10 + 2*T^9 + 2*T^7 + T^6;
  "Factorization of f mod", P, Factorization(f); 
  "Factorization of g mod", P, Factorization(g); 
end for;
\end{verbatim} 

By scanning through the output one can find that for instance modulo $\mathcal{P} = T + 1$ the above polynomials stay irreducible: 
\begin{equation} 
\begin{split}
\bar{f}(y) =  \ & y^8 + 2y^2 + 2 \mod{\mathcal{P}}  \\
\bar{g}(y) = \ & y^8 + y^6 + 2y^5 + 2y^4 + y^3 + 1 \mod{\mathcal{P}}.
\nonumber
\end{split}
\end{equation}

Meanwhile modulo $\mathcal{P} = T^2 +1$ each polynomial factors as product of two irreducible polynomials of degrees two and six: 
\begin{equation} 
\begin{split}
\ \ \ \ \ \ \ \ \ \ \bar{f}(y) =  \ &(y^2 + T + 1)(y^6 + (2T + 2)y^4 + 2Ty^2 + 2T + 2) \mod{\mathcal{P}}  \\
\ \ \ \ \ \ \ \ \ \ \bar{g}(y) = \ &(y^2 + (2T + 1)y + 2T + 2) \times \\
                   \ & \times (y^6 + (T + 2)y^5 + 2Ty^4 + y^3 + (2T + 2)y + 2T + 1)~\mod{\mathcal{P}}.
\nonumber
\end{split}
\end{equation}

\subsection{Example with the Galois Group $\GL_2(\mathbb F_4)$ over $\mathbb F_2(T)$}
To construct an extension of $F =\mathbb F_2(T)$ with the Galois group $\GL_2(\mathbb F_4)$ we pick the Drinfeld module associated to the map $T \to \tau^2 + \tau + T$ and adjoint to $F$ the $(T^2 + T + 1)$~-~torsion points. The corresponding action of $(T^2 + T + 1)$ on $\mathbb F_q(T)$ is given by the following operator:
\begin{equation}
\begin{split}
(\tau^2 \ + \ & \tau + T)^2  + (\tau^2 + \tau + T) + 1 =  \\
& = \tau^4 + \tau^3 + T^4\tau^2 + \tau^3 + \tau^2 + T^2\tau + T\tau^2 + T\tau + T^2  + \tau^2 + \tau + T + 1 = \\
& = \tau^4 + (T^4 + T)\tau^2 + (T^2 + T + 1)\tau + T^2 + T + 1.
\nonumber
\end{split}
\end{equation}
And therefore the $(T^2 + T +1)$-torsion points are defined by the equation given below:
\begin{equation}\label{Eq3}
y^{15} + (T^4 + T)y^3 + (T^2+T+1)y + T^2 + T + 1.
\end{equation}

As before we verify with Magma that the Galois group of the Normal closure of the field generated by the roots of \ref{Eq3} is isomorphic to $\GL_2(\mathbb F_4)$. This produces explicit equations of the arithmetical equivalent pair of degree 15:
\begin{equation}
\begin{split}
f(y&)  \  = \  y^{15} + (T^4 + T)y^3 + (T^2 + T + 1)y + T^2 + T + 1 ; \\    
g(y&)  \ = \  y^{15} + (T^4 + T^2 + 1)y^{13}  + (T^{10} + T^7 + T^6 + T^5 + T^4 + T^3 + 1)y^{12} + \\ 
            + & \ (T^{12} + T^{11} + T^8 + T^7 + T^4 + T^3)y^{11} + (T^{16} + T^{14} + T^{10} + T^9 + T^8 + T^6 + T^5 + T + 1)y^{10} + \\
            + & \ (T^{20} + T^{18} + T^{16} + T^{14} + T^8 + T^6 + T^4 + T^2)y^9 + \\ 
            + & \ (T^{21} + T^{19} + T^{16} + T^{14} + T^{7} + T^5 + T^4 + T^2 + T)y^8 + \\
            + & \ (T^{26} + T^{23} + T^{16} + T^{15} + T^{14} + T^{13} + T^{12} + T^{10} + T^8 + T^7 + T^6 + T^2 + T)y^7 + \\ 
            + & \ (T^{30} + T^{28} + T^{27} + T^{25} + T^{23} + T^{22} + T^{21} + T^{20} + T^{19} + T^{18} + T^{15} + T^{14} + \\
            + & \ T^6 + T^5 + T^3 + 1)y^6 + \\
            + & \ (T^{29} + T^{28} + T^{25} + T^{24} + T^{23} + T^{22} + T^{21} + T^{20} + T^{18} + T^{17} + T^{15} + T^{14} + \\
            + & \ T^{13} + T^{12} + T^{11} + T^5 + T^3 + T + 1)y^5 + \\
            + & \ (T^{33} + T^{32} + T^{30} + T^{29} + T^{28} + T^{27} + T^{25} + T^{23} + T^{21} + T^{19} + T^{14} + \\
            + & \ T^{11} + T^10 + T^7 + T^5 + T^3 + T + 1)y^4 + \\
            + &\ (T^{36} + T^{33} + T^{31} + T^{30} + T^{26} + T^{23} + T^{18} + T^{14} + T^{12} + T^{11} + T^8 + T^7 + T^5 + T^4 + T + 1)y^3 + \\
            + &\ (T^{38} + T^{36} + T^{33} + T^{32} + T^{31} + T^{30} + T^{29} + T^{26} + T^{25} + T^{21} + \\
            + & \ T^{19} + T^{17} + T^{12} + T^{10} + T^4 + T^2 + T + 1)y^2 + \\ 
            + &(T^{40} + T^{38} + T^{37} + T^{36} + T^{33} + T^{32} + T^{31} + T^{29} + T^{25} + T^{20} + T^{17} + T^{14} + T^{13} + T^7 + \\
            + &\ T^5 + T^4 + 1)y + T^{44} + T^{43} + T^{42} + T^{40} + T^{38} + T^{37} + T^{34} + T^{33} + T^{30} + T^{21} + T^{20} + T^{19} + \\
            + &\ T^{18} + T^{17} + T^{16} + T^{13} + T^{11} + T^8 + T^7 + T^5 + T^4 + T^3 + 1.
\nonumber
\end{split}
\end{equation}

\section{A final remark}
A careful reader may observe that equations that we take to realise $\GL_r(\mathbb F_{q^m})$ do occur in the list of provided arithmetically equivalent pairs. This is not a coincidence and can be formulated as the following:
\begin{theorem} 
Let $\rho$ be a Drinfeld module of generic characteristic over $K = \mathbb F_q(T)$ without complex multiplication. Suppose that the rank $r$ of $\rho$ is at least $2$. Then for almost all irreducible $a \in \mathbb F_q[T] = A$ the isomorphism class of the extension $M$ generated by adjoining the elements of $\rho[a]$ to $K$ is not determined by its splitting type.
\end{theorem}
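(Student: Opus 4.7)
The plan is to leverage the Open Image Theorem recalled above to reduce the statement to the existence of a non-trivial Gassmann triple inside $\GL_r(\mathbb F_{q^m})$, which the paper already supplies via Examples \ref{ExTransp} and \ref{Example2}. First, I would fix an irreducible $a \in A$ of degree $m$ in the cofinite set for which the Pink--R\"utsche theorem yields $G := \Gal(K(\rho[a])/K) \simeq \GL_r(A/aA) \simeq \GL_r(\mathbb F_{q^m})$, and identify the torsion module $V := \rho[a]$ with $\mathbb F_{q^m}^r$ as a $G$-module. Writing $N := K(\rho[a])$ and picking a nonzero torsion point $\alpha \in V$, the natural extension $M = K(\alpha)$ coincides with the fixed field $N^H$, where $H := \mathrm{Stab}_G(\alpha)$ is the subgroup of matrices whose first column is $(1, 0, \dots, 0)^T$; equivalently, $M = K[y]/(\rho_a(y)/y)$.

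Second, I would locate a subgroup $H' \subset G$ that is Gassmann-equivalent to $H$ but not conjugate to it. After excluding the (finitely many) irreducible $a$ with $q^m \le 2$, which can only arise when $q = 2$ and $\deg a = 1$, we have $r \ge 2$ and $q^m \ge 3$. For $r \ge 3$ Example \ref{Example2} with $s = 1$ directly produces such an $H'$ as the stabilizer of a nonzero covector in $V^{\star}$; for $r = 2$ the same construction is the content of Example \ref{ExTransp}, whose argument transports from $\GL_2(\mathbb F_p)$ to $\GL_2(\mathbb F_{q^m})$ verbatim once $q^m > 2$, since both subgroups lie inside and are normalised by the Borel while the Weyl element does not send one to the other.

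Setting $M' := N^{H'}$ and invoking the function-field form of Theorem \ref{MainPerl}, the Gassmann equivalence of the triple $(G, H, H')$ implies that $M$ and $M'$ have the same splitting type, while the Galois correspondence together with the non-conjugacy of $H$ and $H'$ forces $M \not\simeq M'$ as $K$-extensions. Thus the splitting type of $M$ does not pin down its isomorphism class, as claimed.

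I do not expect a deep obstacle: the Gassmann-triple ingredients are already assembled in the paper, and the single extra check needed --- that Example \ref{ExTransp} remains valid after replacing the prime field by the residue field $\mathbb F_{q^m}$ --- reduces to the direct computation indicated above. The only place where one has to be slightly careful is in accounting for the two finite families of excluded $a$ (those failing the Open Image Theorem and those with $q^m \le 2$), both of which trivially preserve the "almost all" hypothesis.
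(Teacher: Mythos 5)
Your proof follows essentially the same route as the paper: invoke the Pink--R\"utsche open image theorem to identify $G = \Gal(K(\rho[a])/K)$ with $\GL_r(\mathbb F_{q^m})$ for almost all irreducible $a$, exhibit the non-trivial Gassmann triple $(G, H, H')$ via Example \ref{Example2} with $S = \{1\}$ (stabilizers of a nonzero vector and a nonzero covector), and conclude by the function-field analogue of Theorem \ref{MainPerl}. The only slip is notational: you write ``$s = 1$'' where you mean $S = \{1\}$, i.e.\ $s = q^m - 1$ (the stabilizer of a nonzero vector has index $q^{rm}-1$, not $(q^{rm}-1)/(q^m-1)$), and your explicit $r = 2$ versus $r \ge 3$ case split is harmless but unnecessary since the paper handles both uniformly through the condition $q^m - 1 \ge 2$.
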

\begin{proof}
Indeed, as stated in the preliminary section $\rho[a] \simeq (A/aA)^{r}$ which is a vector space if $a$ is irreducible. Since $\rho$ does not have complex multiplication we know that for almost all $a$ the associated Galois group $G$ of the normal closure of $K(\rho[a])$ will be $\GL_r(\mathbb F_{q^m})$, where $m$ is the degree of $a$. Then example \ref{Example2} with $S = \{1\}$ guarantees that the stabilizer $H$ of $\rho[a] - \{ 0 \}$ forms a Gassmann triple $(G, H, H')$ for some $H' \subset G$. Since $A$ has only finitely many elements of a fixed degree we know that $q^m - 1 \ge 2$ for almost all $a$ and therefore the triple $(G, H, H')$ is non-trivial for almost all $a$. But this means that there exists   a field $L$ which is not isomorphic to $K(\rho[a])$ and that share the same splitting type.
\end{proof}

\newpage

\bibliography{mybib}{}
\bibliographystyle{plain}

\newpage

\tableofcontents

\end{document}